\newtheorem{theorem}{Theorem}
\newtheorem{cor}[theorem]{Corollary}
\newtheorem{lemma}[theorem]{Lemma}
\theoremstyle{definition}
\theoremstyle{remark}
\newtheorem{example}{Example}
\long\def\red#1{\bgroup
 \color{red}#1\egroup}
\newif\ifcnote\cnotefalse
\title[Commuting maps on rank-$k$ matrices]
    {Commuting maps on rank-$k$ matrices}
\author{Willian Franca}
\address{Department of Mathematical Sciences, Kent State
University, Kent, Ohio 44242, USA} \email{wfranca@kent.edu} \keywords{commuting maps, commuting traces, rank-${k}$ matrices}
\begin{document}
\maketitle
\begin{abstract}
Let $n\geq2$ be a natural number. Let $M_n(\mathbb{K})$ be the ring of all $n \times n$  matrices over a field $\mathbb{K}$. Fix natural number $k$ satisfying $1<k\leq n$. Under a mild technical assumption over $\mathbb{K}$ we will show that additive maps $G:M_n(\mathbb{K})\to
M_n(\mathbb{K})$ such that $[G(x),x]=0$ for every rank-$k$ matrix $x\in M_n(\mathbb{K})$ are of form $\lambda x + \mu(x)$, where $\lambda\in Z$, $\mu:M_n(\mathbb{K})\to Z$, and $Z$ stands for the center of $M_n(\mathbb{K})$. Furthermore, we shall see an example that there are additive maps such that $[G(x),x]=0$ for all rank-$1$ matrices that are not of the form $\lambda x + \mu(x)$. We will also discuss the $m$-additive case.
\end{abstract}

 Let $n\geq2$ be a natural number, and let $\mathbb{K}$ be a field. An additive map $G:M_n(\mathbb{K})\to M_n(\mathbb{K})$ is called commuting if for each $x\in M_n(\mathbb{K})$ the equality $[G(x),x]=G(x)x-xG(x)=0$ holds. In \cite{WF} we proved that if $G: M_n(\mathbb{K})\to M_n(\mathbb{K}) $ is an additive map and $[G(x),x]=0$ for every invertible $x\in M_n(\mathbb{K})$ then $G$ has the form

\begin{equation}\label{E100}
G(x)=\lambda x + \mu(x),
\end{equation}
where $\lambda\in Z$, $\mu:M_n(\mathbb{K})\to Z$ is an additive map, $Z=\mathbb{K}\cdot I$ is the center of $M_n(\mathbb{K})$, $I$ is the identity matrix, and
$\mathbb{K}$ is any field but $\mathbb{Z}_2$. Thus, writing this result in terms of rank, we could say that: If $G: M_n(\mathbb{K})\to M_n(\mathbb{K})$ is an additive map such that $[G(x),x]=0$ for each rank-$n$ matrix $x\in M_n(\mathbb{K})$ then $G$ has the form (\ref{E100}). So, it gave rise to a natural question: For a fixed natural number $k$ satisfying $1\leq k\leq n-1$  does an additive map $G$ from $M_n(\mathbb{K})$ to itself with the property that $[G(x),x]=0$ for every rank-$k$ matrix $x\in M_n(\mathbb{K})$ have the description (\ref{E100})? Under the assumption that $\mathbb{K}$ is a field that either char $\mathbb{K}=0$ or char $\mathbb{K}>3$ the answer will be yes when $1<k\leq n-1$. In the case that $k=1$ we have a negative answer for $n\geq3.$

Now, let us see how this problem can be formulated in terms of $m$-additive maps. Let $n>m>1$ be natural numbers, and let $\mathbb{K}$ be a field such that char $\mathbb{K}=0$ or char $\mathbb{K}\geq m+1$. We say that $G:M_n(\mathbb{K})^{m}\to M_n(\mathbb{K})$ is $m$-additive if $G$ is additive in each component, that is, $G(x_1,\ldots,x_i+y_i,\ldots,x_m)=G(x_1,\ldots,x_i,\ldots,x_m)+G(x_1,\ldots,y_i,\ldots,x_m)$ for all $x_i,y_i\in M_n(\mathbb{K})$, and $i\in\{1,\ldots,m\}$.  The map $T:M_n(\mathbb{K})\to M_n(\mathbb{K})$ defined by $T(x)=G(x,\ldots,x)$ is known as the trace of $G$, and  such traces are called commuting if for each $x\in M_n(\mathbb{K})$ the equality $[G(x,\ldots,x),x]=0$ holds.

In the present work, we will characterize all $m$-additive maps $G:M_n(\mathbb{K})^m\to M_n(\mathbb{K})$ such that $[G(x,\ldots,x),x]=0$ for every rank-$k$ matrix $x\in M_n(\mathbb{K})$, where $k$ is a fixed natural number satisfying $(m+1)\leq k\leq n$.

We just want to remind that any ($m$-)additive map $G:M_n(\mathbb{K})^{(m)}\to M_n(\mathbb{K})$ is\;\; ($m$-)linear over $\mathbb{Q}$ (respectively $\mathbb{Z}_q$) when char $\mathbb{K}=0$ (respectively char $\mathbb{K}=q$). This fact will be largely used in this paper. Furthermore, it should be mentioned that the set of all nonzero elements of $\mathbb{K}$ will be denoted by $\mathbb{K}^*$.

Let us start with the additive case. First, we need a couple of auxiliary results.
\begin{lemma}\label{E100000}
Let $n\geq3$ be a natural number, and let $M_n(\mathbb{K})$ be the ring of all $n\times n$  matrices over $\mathbb{K}$. Fix a natural number $k$ satisfying $1<k\leq n$, and numbers $i,j\in\{1,\ldots n\}$. Then there exists a matrix $B=B(i,j)\in M_n(\mathbb{K})$ such that $ze_{ij}+tB$ has rank $k$\; for all nonzero $z,t\in \mathbb{K}$.
\end{lemma}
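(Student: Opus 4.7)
My plan is to construct $B=B(i,j)$ explicitly in the two cases $i\neq j$ and $i=j$, and in each case to exhibit a $k\times k$ submatrix of $ze_{ij}+tB$ whose determinant is a nonzero monomial in $t$; together with the observation that $ze_{ij}+tB$ has at most $k$ nonzero rows, this will force the rank to be exactly $k$ for every nonzero pair $z,t\in\mathbb{K}$.

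\emph{Case $i\neq j$.} Since $k\geq 2$, I can choose a subset $S\subseteq\{1,\ldots,n\}$ of size $k$ with $\{i,j\}\subseteq S$, and set
\[
B=\sum_{\ell\in S}e_{\ell\ell}.
\]
The matrix $A=ze_{ij}+tB$ has all its nonzero entries inside rows and columns indexed by $S$, so its rank equals the rank of its $S\times S$ block. That block takes the form $tI_{k}+z\tilde e_{ij}$, where $\tilde e_{ij}$ is the $k\times k$ matrix unit at the position corresponding to $(i,j)$. Since $i\neq j$ this off-diagonal perturbation is nilpotent, so the block is conjugate to an upper-triangular matrix with $t$'s on the diagonal, hence has determinant $t^{k}\neq 0$, and the rank of $A$ is $k$.

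\emph{Case $i=j$.} Set $i_{0}=i$ and pick $i_{1},\ldots,i_{k-1}$ distinct from $i_{0}$ and from each other so that $S=\{i_{0},i_{1},\ldots,i_{k-1}\}\subseteq\{1,\ldots,n\}$ has size $k$; this is possible because $k\leq n$. Take $B$ to be the cyclic permutation on $S$,
\[
B=e_{i_{0}i_{1}}+e_{i_{1}i_{2}}+\cdots+e_{i_{k-2}i_{k-1}}+e_{i_{k-1}i_{0}}.
\]
Again every nonzero entry of $A=ze_{ii}+tB$ lies in $S\times S$, and the corresponding $k\times k$ block, with rows and columns ordered by $(i_{0},\ldots,i_{k-1})$, reads
\[
\begin{pmatrix} z & t & 0 & \cdots & 0 \\ 0 & 0 & t & \cdots & 0 \\ \vdots & & & \ddots & \vdots \\ 0 & 0 & \cdots & 0 & t \\ t & 0 & \cdots & 0 & 0 \end{pmatrix}.
\]
Expanding the determinant along the last row, whose only nonzero entry is the $t$ in the first column, leaves a single minor equal to $tI_{k-1}$ with determinant $t^{k-1}$; the whole determinant is therefore $\pm t^{k}$, independent of $z$ and nonzero since $t\neq 0$. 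Hence $A$ has rank $k$.

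The only real subtlety is the case $i=j$: a diagonal choice of $B$ as in the other case would not do, because $ze_{ii}$ can cancel the $(i,i)$-entry of $tB$ and drop the rank. The cyclic-permutation construction places the $z$-entry in a position from which it is eliminated by the cofactor expansion along the wrap-around row, so the determinant comes out a pure power of $t$ regardless of $z$.
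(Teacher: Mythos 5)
Your proof is correct, but your construction is genuinely different from (and more laborious than) the paper's. The paper picks the $k-1$ unit entries of $B$ at positions $(i_v,j_v)$, $v\in\{2,\ldots,k\}$, with all row indices distinct from each other \emph{and from $i$}, and all column indices distinct from each other \emph{and from $j$}; then $ze_{ij}+tB$ has exactly $k$ nonzero entries occupying $k$ distinct rows and $k$ distinct columns, so its rank is $k$ on sight --- one uniform construction, no case distinction, no determinant. Your construction instead lets the support of $B$ meet row $i$ (diagonal $B$ when $i\neq j$, a cycle through $i$ when $i=j$), which is exactly what forces your two-case analysis and the cofactor computations showing the relevant $k\times k$ minor is $\pm t^k$; the cancellation issue you flag at the end ($ze_{ii}$ killing a diagonal entry of $tB$) is an artifact of that overlap, and the paper's choice of $B$ sidesteps it entirely by avoiding row $i$ and column $j$ altogether. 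What your version buys is an explicit nonvanishing minor, which some readers may find more self-contained than the paper's ``clearly''; what the paper's version buys is brevity and uniformity, and the same disjoint-rows-and-columns idea is what generalizes directly to its Lemma 2 and Lemma 5.
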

\begin{proof}
Fix $i,j,k$ as described in the statement. For each $v\in\{2,\ldots,k\}$ choose $i_v,j_v\in\{1,\ldots,n\}$ such that  $i\neq i_u, j\neq j_u, i_u\neq i_v$, and $j_u\neq j_v$ when $u\neq v$ ($u,v\in\{2,\ldots,k\})$. Set $B=\sum_{v=2}^{k}e_{i_vj_v}$. Clearly $ze_{ij}+tB$ has rank $k$ for all $z,t\in\mathbb{K}^*$.
\end{proof}

In the same way we can prove the following:

\begin{lemma}\label{E102}
Let $n\geq3$ be a natural number, and let $M_n(\mathbb{K})$ be the ring of all $n\times n$  matrices over $\mathbb{K}$. Fix a natural number $k$ satisfying $1<k\leq n$, and numbers $i,j,k,l\in\{1,\ldots n\},$ where $(i,j)\neq(k,l)$. Then there exists a matrix $B=B(i,j,k,l)\in M_n(\mathbb{K})$ such that $ze_{ij}+we_{kl}+tB$ has rank $k$\; for all nonzero $z,w,t\in\mathbb{K}$.
\end{lemma}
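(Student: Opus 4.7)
The plan is to imitate the construction in Lemma~\ref{E100000}, splitting into cases according to whether the two fixed positions $(i,j)$ and $(k,l)$ share a row or a column. Throughout I use ``$k$'' for the fixed rank and, to work around the notational clash in the statement, I rename the index ``$k$'' appearing in the pair $(k,l)$ as $p$ in the sketch. I first observe that $ze_{ij}+we_{pl}$ has rank $2$ when $i\neq p$ and $j\neq l$, and rank $1$ otherwise, uniformly in the nonzero scalars $z,w$. The matrix $B$ will be a sum of elementary matrices $e_{i_v j_v}$ in rows and columns (almost) disjoint from those already used; its role is to boost the rank up to $k$.

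In the generic case $i\neq p$, $j\neq l$, I would pick $i_3,\ldots,i_k$ and $j_3,\ldots,j_k$ so that $\{i,p,i_3,\ldots,i_k\}$ and $\{j,l,j_3,\ldots,j_k\}$ are each $k$-element subsets of $\{1,\ldots,n\}$, which is possible because $k\leq n$. Setting $B=\sum_{v=3}^{k}e_{i_v j_v}$, the matrix $ze_{ij}+we_{pl}+tB$ has exactly $k$ nonzero rows, each carrying a single nonzero entry, and these entries sit in $k$ distinct columns; hence the rank is $k$ for every choice of nonzero $z,w,t$.

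In the shared-row case $i=p$ (so $j\neq l$), I would pick $i_2,\ldots,i_k$ pairwise distinct and all different from $i$, and $j_2,\ldots,j_k$ pairwise distinct and all different from $j$, and set $B=\sum_{v=2}^{k}e_{i_v j_v}$. Row $i$ of $ze_{ij}+we_{il}+tB$ then carries the two nonzero entries $z$ and $w$ at columns $j$ and $l$, while each of the $k-1$ rows $i_v$ carries a single nonzero entry $t$ in column $j_v\neq j$. Inspecting column $l$ first (it is touched only by row $i$) forces the coefficient of row $i$ in any linear dependence to vanish, and then the columns $j_v$ kill the remaining coefficients one by one; these $k$ nonzero rows are thus independent and the rank is $k$. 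The shared-column case $j=l$, $i\neq p$ is entirely symmetric.

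The main obstacle is really just the bookkeeping in the shared-row/shared-column case: if I were to allow both $j$ and $l$ to appear among the $j_v$'s, then row $i$ would lie in the span of two of the rows coming from $B$ and the rank would drop to $k-1$. Avoiding only one of $\{j,l\}$ among the $k-1$ chosen column indices is always feasible under the hypothesis $k\leq n$, and this is what makes the construction work uniformly across all three cases.
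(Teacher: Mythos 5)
Your construction is essentially the paper's own: the paper gives no separate argument for this lemma, asserting only that it follows ``in the same way'' as Lemma~\ref{E100000}, namely by completing the prescribed elementary matrices to a rank-$k$ configuration via a sum of elementary matrices in fresh rows and columns; your case split (generic versus shared row/column) spells out exactly what that gloss hides, including the correct observation that when $k=n$ one cannot avoid both columns $j$ and $l$, so avoiding just one of them is the right requirement.

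One internal slip in the shared-row case $i=p$: since you required the $j_v$ to be distinct from $j$ (not from $l$), the column that is guaranteed to be touched only by row $i$ is column $j$, not column $l$ --- some $j_v$ may equal $l$, so the parenthetical justification as written is not implied by your constraints. The fix is immediate: inspect column $j$ first to kill the coefficient of row $i$, then the columns $j_v$ kill the remaining coefficients (any row $i_v$ with $j_v=l$ causes no trouble once row $i$'s coefficient is zero); alternatively, require the $j_v$ to avoid $l$ instead of $j$ and keep your inspection order. With that one-line repair the proof is complete and covers all cases of the statement.
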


\begin{theorem}\label{T5}
Let $n\geq3$ be a natural number. Let $M_n(\mathbb{K})$ be the ring of all $n
\times n$  matrices over $\mathbb{K}$ with center $Z=\mathbb{K}\cdot I$, where $I$ is the identity matrix. Fix a natural number $k$ satisfying $1<k\leq n-1$. Assume that char $\mathbb{K}=0$ or char $\mathbb{K}>3$. Let $G:M_n(\mathbb{K})\to
M_n(\mathbb{K})$ be an additive map such that
\begin{equation}\label{E31}
[G(x),x]=0 \quad \mbox{for every rank-$k$ matrix $x\in M_n(\mathbb{K})$.}
\end{equation}
Then there exist an element $\lambda \in Z$ and an additive map $\mu: M_n(\mathbb{K}) \to Z$ such that
\begin{equation}\label{E2}
G(x)=\lambda x + \mu(x) \quad \mbox{for each $x\in M_n(\mathbb{K})$.}
\end{equation}
\end{theorem}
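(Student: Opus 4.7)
The plan is to use the parametric families of rank-$k$ matrices provided by Lemmas \ref{E100000} and \ref{E102} to linearize the commuting identity, upgrading $[G(x),x]=0$ from rank-$k$ matrices to every $x\in M_n(\mathbb{K})$. Once this is achieved, the theorem from \cite{WF} quoted in the introduction closes the argument, since it already gives the conclusion under the weaker hypothesis $[G(x),x]=0$ on invertible matrices. The assumption char $\mathbb{K}\in\{0\}\cup\{p:p>3\}$ will be used exactly to guarantee that $1,2,3$ are distinct nonzero scalars in $\mathbb{K}$, so that the Vandermonde determinants appearing below are invertible; the absence of $\mathbb{K}$-linearity of $G$ is sidestepped by varying only integer scalars in the linearizations.

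For the first step, fix indices $i,j$, let $B=B(i,j)$ be as in Lemma \ref{E100000}, and pick any $z\in\mathbb{K}^{*}$. For each $t\in\{1,2,3\}$ the matrix $ze_{ij}+tB$ has rank $k$; since $G$ is additive, $G(ze_{ij}+tB)=G(ze_{ij})+tG(B)$, and expanding $[G(ze_{ij}+tB),\,ze_{ij}+tB]=0$ produces a degree-two polynomial identity in $t$ with coefficients in $M_n(\mathbb{K})$. The relevant Vandermonde determinant is invertible, so every coefficient vanishes; in particular
\begin{equation*}
[G(ze_{ij}),ze_{ij}]=0\quad\text{and}\quad[G(B),B]=0.
\end{equation*}
Dividing the first identity by $z\ne 0$ yields $[G(ze_{ij}),e_{ij}]=0$ for every $z\in\mathbb{K}$ (the case $z=0$ being trivial, as $G(0)=0$).

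For the second step, fix indices with $(i,j)\neq(k,l)$, put $B'=B(i,j,k,l)$ from Lemma \ref{E102}, and take $z,w\in\mathbb{K}^{*}$ together with $t\in\{1,2,3\}$. Applying the same Vandermonde trick to $x=ze_{ij}+we_{kl}+tB'$, the $t^{0}$-coefficient of $[G(x),x]=0$ reads
\begin{equation*}
[G(ze_{ij})+G(we_{kl}),\,ze_{ij}+we_{kl}]=0.
\end{equation*}
By the first step the two diagonal commutators $[G(ze_{ij}),ze_{ij}]$ and $[G(we_{kl}),we_{kl}]$ vanish, so this reduces to the key identity
\begin{equation*}
[G(ze_{ij}),we_{kl}]+[G(we_{kl}),ze_{ij}]=0,
\end{equation*}
which I shall call $(*)$; it also holds trivially when $(i,j)=(k,l)$ or when one of $z,w$ is zero.

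To finish, write $x=\sum_{i,j}z_{ij}e_{ij}$ and $y=\sum_{p,q}w_{pq}e_{pq}$. Additivity of $G$ together with the bilinearity of $[\cdot,\cdot]$ and the identity $(*)$ give
\begin{equation*}
[G(x),y]+[G(y),x]=\sum_{i,j,p,q}\bigl([G(z_{ij}e_{ij}),\,w_{pq}e_{pq}]+[G(w_{pq}e_{pq}),\,z_{ij}e_{ij}]\bigr)=0.
\end{equation*}
Setting $y=x$ and dividing by $2$ (permitted since char $\mathbb{K}\ne 2$) yields $[G(x),x]=0$ for every $x\in M_n(\mathbb{K})$, in particular for every invertible $x$. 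The theorem of \cite{WF} recalled in the introduction then produces $\lambda\in Z$ and an additive $\mu:M_n(\mathbb{K})\to Z$ with $G(x)=\lambda x+\mu(x)$. The delicate point throughout is that $G$ is merely additive, not $\mathbb{K}$-linear, so field scalars cannot be pulled out of $G$; the two lemmas are tailored precisely so that the Vandermonde step runs on integer parameters while the ``$\mathbb{K}$-valued'' coordinates $z,w$ still range freely over $\mathbb{K}$.
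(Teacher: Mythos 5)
Your proposal is correct and follows essentially the same route as the paper: both use Lemmas \ref{E100000} and \ref{E102} to embed $ze_{ij}$ and $ze_{ij}+we_{kl}$ into rank-$k$ matrices, extract the constant coefficient of the resulting polynomial identity in the integer parameter $t$ (you interpolate at $t=1,2,3$ via Vandermonde, the paper at $t=1,-1,2$, which is the same device), and thereby upgrade $[G(x),x]=0$ to all of $M_n(\mathbb{K})$. The only difference is cosmetic: you close with the invertible-matrix theorem of \cite{WF}, while the paper invokes Bre\v sar's commuting-map theorem \cite{B1}; both are valid since, once $[G(x),x]=0$ holds everywhere, either result yields the form (\ref{E2}).
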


\begin{proof}
Let $z,w\in\mathbb{K}$, and $i,j,k,l\in\{1,\ldots,n\}$, where $(i,j)\neq(k,l)$. All the identities that will be obtained during this proof can be easily checked when either $z$ or $w$ are zero. So, we may assume that $z,w$ are nonzero elements of $\mathbb{K}$.  Using Lemma \ref{E100000}, we can find a matrix $B\in M_n(\mathbb{K})$ such that $ze_{ij}+tB$ has rank $k$ for all nonzero $t\in\mathbb{K}$. Then equation (\ref{E31}) tells us that $[G(ze_{ij}+tB),ze_{ij}+tB]=0$ for all $t\in\mathbb{K}^*$. Replacing  $t$ by $1,-1$ and $2$ we arrive at (because char $\mathbb{K}>3$):
\begin{equation}\label{105}
[G(ze_{ij}),ze_{ij}]=0\quad\mbox{for all}\quad z\in\mathbb{K},\quad\mbox{and}\quad i,j\in\{1,\ldots,n\}.
\end{equation}

On the other hand, Lemma \ref{E102} guarantees the existence of  an element $D\in M_n(\mathbb{K})$ such that $ze_{ij}+we_{kl}+tD=c +tD$ has rank $k$. Again, equation (\ref{E31})  provides $[G(c+tD),c+tD]=0$. So, it follows from the same argument that we used in the proof of equation (\ref{105}) that  $[G(c),c]=0$, which becomes
\begin{equation}
[G(ze_{ij}),we_{kl}]+[G(we_{kl}),ze_{ij}]=0,
\end{equation}
since $c=ze_{ij}+we_{kl}$ (note that equation (\ref{105}) has been used).

Hence,
$
[G(ze_{ij}),we_{kl}]+[G(we_{kl}),ze_{ij}]=0,$
for all $i,j,k\;,l\in\{1,\ldots n\}$, and $z,w\in \mathbb{K}$. Thus, $[G(x),x]=0$ for each $x\in M_n(\mathbb{K})$, because $G$ is additive and $M_n(\mathbb{K})$ is additively generated by $ze_{ij}$. The desired result now is obtained from the well-known theorem on commuting maps due to Bre\v sar (see
the original paper \cite{B1}, or the survey paper \cite[Corollary 3.3]{B2}, or the book \cite[Corollary 5.28]{BCM}).

\end{proof}

Now, we will study commuting maps on the set of rank-$1$ matrices. The proof of the next result  can be found in \cite[Theorem 1]{WF} (case $n=2$).

\begin{theorem}\label{T2}
Let  $\mathbb{K}$ be a field, and $n=2$. Let $M_2(\mathbb{K})$  be the ring of all $2 \times 2$  matrices over
$\mathbb{K}$ with center $Z=\mathbb{K}\cdot I$, where $I$ is the identity matrix. Let $G:M_2(\mathbb{K})\to M_2(\mathbb{K})$ be an additive map
such that

\begin{equation}\label{E3}
[G(x),x]=0 \quad \mbox{for every rank-$1$ matrix}\quad x\in M_2(\mathbb{K}).
\end{equation}
Then there exist an element $\lambda \in Z$ and an additive map $\mu: M_2(\mathbb{K}) \to Z$ such that

\begin{equation}\label{E60}
G(x)=\lambda x + \mu(x) \quad \mbox{for each } x\in M_2(\mathbb{K}).
\end{equation}

\end{theorem}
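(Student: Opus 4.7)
The plan is to upgrade the rank-$1$ hypothesis to the full commuting condition $[G(x),x]=0$ for \emph{every} $x\in M_2(\mathbb{K})$; Bre\v sar's theorem then delivers the conclusion, exactly as at the end of the proof of Theorem~\ref{T5}. The obstacle is that the linearisation trick used there---perturbing a rank-$k$ test matrix $ze_{ij}$ by $tB$ with $B$ chosen to keep the rank equal to $k$---is no longer available here, since the rank-$1$ cone in $M_2(\mathbb{K})$ is very thin: the only ``free'' two-parameter rank-$1$ families at one's disposal are the single-cell matrices $ae_{ij}$ and the L-shaped families $ae_{ii}+be_{ij}$ or $ae_{ji}+be_{ii}$ (with $i\ne j$), each of which has a zero row or a zero column.

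First I would plug $x=ae_{ij}$ into \eqref{E3} to obtain $[G(ae_{ij}),e_{ij}]=0$ for every $a\in\mathbb{K}$. A direct $2\times 2$ computation shows that the centraliser of $e_{ii}$ in $M_2(\mathbb{K})$ is the diagonal subalgebra, while the centraliser of $e_{ij}$ for $i\ne j$ is $\mathbb{K} I+\mathbb{K} e_{ij}$. Hence there exist additive maps $f_{11},f_{22},h_{11},h_{22},p,q,r,s:\mathbb{K}\to\mathbb{K}$ with
\[
G(ae_{11})=f_{11}(a)e_{11}+f_{22}(a)e_{22},\qquad G(ae_{22})=h_{11}(a)e_{11}+h_{22}(a)e_{22},
\]
\[
G(ae_{12})=p(a)I+q(a)e_{12},\qquad G(ae_{21})=r(a)I+s(a)e_{21}.
\]

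Next, I would pass to the four L-shaped rank-$1$ families $ae_{11}+be_{12}$, $ae_{11}+be_{21}$, $ae_{12}+be_{22}$, $ae_{21}+be_{22}$. Feeding the first into \eqref{E3}, substituting the expressions above, cancelling the single-cell contributions, and reading off the $e_{12}$-component produces
\[
b\bigl(f_{11}(a)-f_{22}(a)\bigr)=a\,q(b)\qquad\text{for all }a,b\in\mathbb{K}.
\]
Setting $a=1$ and $b=1$ independently forces both sides to be $\mathbb{K}$-linear in their free variable, governed by a single scalar $\lambda:=q(1)$; thus $q(b)=\lambda b$ and $(f_{11}-f_{22})(a)=\lambda a$. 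The three analogous identities coming from the remaining L-shaped pairs give $s(a)=\lambda a$ and $(h_{22}-h_{11})(b)=\lambda b$ with the \emph{same} $\lambda$. The consistency of this common scalar across all four L-shaped pairs is the main technical content of the argument.

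At this point the two ``hard'' pairs $(e_{11},e_{22})$ and $(e_{12},e_{21})$---whose sums are generically of rank two and therefore inaccessible directly---require no further input. Since $G(ae_{11})$ and $G(be_{22})$ are diagonal, they centralise $e_{22}$ and $e_{11}$, giving $[G(ae_{11}),be_{22}]+[G(be_{22}),ae_{11}]=0$ for free; and the preceding paragraph immediately yields
\[
[G(ae_{12}),be_{21}]+[G(be_{21}),ae_{12}]=\lambda ab\bigl([e_{12},e_{21}]+[e_{21},e_{12}]\bigr)=0.
\]
Expanding a generic $x=\sum x_{ij}e_{ij}$ as a sum of four single cells, all intra-cell commutators vanish by the first step and all inter-cell cross terms by the subsequent two, so $[G(x),x]=0$ for every $x\in M_2(\mathbb{K})$. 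Bre\v sar's commuting-map theorem, invoked exactly as in the proof of Theorem~\ref{T5}, then produces $\lambda\in Z$ and an additive $\mu:M_2(\mathbb{K})\to Z$ with $G(x)=\lambda x+\mu(x)$, as required.
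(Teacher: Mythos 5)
Your proof is correct --- I checked each step: the centralizer descriptions (the centralizer of $e_{ii}$ is the diagonal algebra, that of $e_{ij}$, $i\neq j$, is $\mathbb{K}I+\mathbb{K}e_{ij}$), the four L-shaped identities, which do pin down a single scalar $\lambda$ via $q(b)=s(b)=\lambda b$ and $f_{11}(a)-f_{22}(a)=h_{22}(a)-h_{11}(a)=\lambda a$, the vanishing of the cross terms for the two ``hard'' pairs $(e_{11},e_{22})$ and $(e_{12},e_{21})$, and the cell-by-cell expansion showing $[G(x),x]=0$ on all of $M_2(\mathbb{K})$. However, your route is genuinely different from the paper's, because the paper supplies no argument for this theorem at all: it simply cites \cite[Theorem 1]{WF}, where the statement arises as the $n=2$ case of an earlier result (for $n=2$ the nonzero singular matrices are precisely the rank-$1$ ones). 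What your argument buys is self-containedness: it stays inside $M_2(\mathbb{K})$ and transplants the architecture of the paper's proof of Theorem \ref{T5} --- upgrade the restricted commuting hypothesis to $[G(x),x]=0$ everywhere, then finish with Bre\v sar's theorem --- to the rank-$1$ setting, where the perturbation lemmas (Lemmas \ref{E100000} and \ref{E102}) are unavailable; you replace them with the L-shaped families together with two structural observations (diagonal matrices commute, handling the pair $(e_{11},e_{22})$, and $[A,B]+[B,A]=0$, handling the pair $(e_{12},e_{21})$). A further bonus worth stating explicitly: your argument needs no hypothesis on $\mathbb{K}$ whatsoever, since every identity is evaluated only at arbitrary $a,b$ and at $1$, and Bre\v sar's theorem for commuting \emph{additive} maps on prime rings carries no characteristic or cardinality restriction; so the conclusion holds even for $\mathbb{K}=\mathbb{Z}_2$, matching the full generality claimed in the statement, in contrast with Theorem \ref{T5}, which requires char $\mathbb{K}=0$ or char $\mathbb{K}>3$.
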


Surprisingly, Theorem \ref{T5} fails when $k=1$ and $n\geq3$.

\begin{example}
Let $\mathbb{K}$  be any field. Fix a natural number $n \geq 3$. Let us define a linear map from $M_n(\mathbb{K})$ to itself in the following way:
$G(ze_{11})=- ze_{n2}$, $G(ze_{1n})= ze_{12}$, $G(ze_{21})= ze_{n1}$, $G(ze_{2n})=\sum_{j=2}^{n} ze_{jj}$, and $G(ze_{ij})=0$ otherwise, where $z\in\mathbb{K}$. The map $G$ is commuting on the set of rank-$1$ matrices. Indeed, let $x$ be a rank-$1$ matrix. Thus, $x$ can be written in the following way:
\begin{equation*}
x=\sum_{j=1}^{n}\lambda_j\sum_{i=1}^{n}x_ie_{ij}\;,
\end{equation*}
where $x_v,\lambda_v\in \mathbb{K}$ for all $v\in\{1,\ldots,n
\}$, and $\lambda_l=1$ for some $l\in\{1,\ldots,n\}$.
Therefore, for the above $x$, we have:
\begin{equation*}
\displaystyle{G(x)=\left(-\lambda_1x_1e_{n2}+\lambda_{n}x_1e_{12}+\lambda_1x_2e_{n1}+\sum_{j=2}^{n}\lambda_nx_2e_{jj}\right)}.
\end{equation*}
By standard computations, we see that:
\begin{equation*}
G(x)x=\lambda_nx_2\left[\sum_{j=1}^{n}\lambda_j\sum_{i=1}^{n}x_ie_{ij}\right]=\\ xG(x).
\end{equation*}
Hence, $[G(x),x]=0$ for all rank-$1$ matrix $x\in M_n(\mathbb{K})$, but $G$ is not of the form (\ref{E100}) as it maps the identity element $I$ to a noncentral element $-e_{n2}$.

\end{example}

Now, we will investigate commuting traces of  $m$-additive maps.\vspace{.2cm}

First of all, observe that we may assume that our $m$-additive map $G:M_n(\mathbb{K})^{m}\to M_n(\mathbb{K})$, whose trace commute on the set of rank-$k$ matrices ($m+1\leq k \leq n$), is symmetric because we can replace $G$ with $\sum_{\sigma \in S_m}G(x_{\sigma(1)},\ldots,x_{\sigma(m)})=G^{'}(x_1,\ldots,x_m)$, and it is clear that $G^{'}$ is symmetric and that $[G^{'}(x,\ldots,x),x]=0$ for each rank-$k$ matrix $x\in M_n(\mathbb{K})$. Notice that $G^{'}(x,\ldots,x)=m!G(x,\ldots,x)$, thus the trace of $G^{'}$ is identically zero if and only if the trace of $G$ is, since either char $\mathbb{K}=0$ or char $\mathbb{K}\geq m+1$.

Secondly, we will state the following result that is a mere generalization of the Lemma \ref{E102}.

\begin{lemma}\label{P1}
Let  $\mathbb{K}$ be a field, and let $m,n$ be natural numbers, where $n\geq m+1,$ and $m>1$. Let $M_n(\mathbb{K})$ be the ring of all $n\times n$  matrices over $\mathbb{K}$. Fix a natural number $k$ satisfying $(m+1)\leq k\leq n$. Let $v$ be a natural number such that $1\leq v\leq m+1$. For each $r\in\{1,\ldots,v\}$, let $i_r,j_r\in\{1,\ldots n\}$, where $(i_u,j_u)\neq(i_v,j_v)$ when $u\neq v$. Then there exists a matrix $B=B((i_r,j_r)|r\in\{1,\ldots,v\})\in M_n(\mathbb{K})$ such that $\sum_{r=1}^{v}z_{r}e_{i_rj_r}+tB$ has rank $k$ for all nonzero $z_1,\ldots,z_v,t\in \mathbb{K}$.

\end{lemma}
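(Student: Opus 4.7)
The plan is to construct $B$ explicitly as a sum of matrix units at carefully chosen positions, adapting the argument of Lemma \ref{E100000}. Write $I := \{i_1, \ldots, i_v\}$ and $J := \{j_1, \ldots, j_v\}$ for the sets of distinct row- and column-indices appearing among the given pairs, with $|I| = p \le v$ and $|J| = q \le v$, and set $M_0 := \sum_{r=1}^{v} z_r e_{i_r j_r}$.

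I would first treat the generic subcase $p = q = v$, i.e.\ all given row-indices distinct and all given column-indices distinct. Then $M_0$ has rank exactly $v$ for every choice of nonzero $z_r$'s. Picking $k - v$ fresh pairs $(i_{v+1}, j_{v+1}), \ldots, (i_k, j_k)$ with the new row-indices mutually distinct and disjoint from $I$, and the new column-indices mutually distinct and disjoint from $J$, and setting $B := \sum_{r=v+1}^{k} e_{i_r j_r}$, the matrix $M_0 + tB$ becomes (after a permutation) block-diagonal with blocks of ranks $v$ and $k - v$, hence of rank exactly $k$ -- the direct analogue of Lemma \ref{E100000}.

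The main obstacle arises when some indices coincide, so that the bipartite ``position graph'' on $I \sqcup J$ has vertices of degree $\ge 2$ (whence $\mathrm{rank}(M_0) < v$) or, worse, a cycle. In the cyclic case -- possible already for $v \ge 4$, as in the $4$-cycle $(1,1), (1,2), (2,1), (2,2)$ -- the rank of $M_0$ depends on the $z_r$'s and no disjoint block-diagonal extension works. To handle this I would use a ``cross'' construction on each cyclic connected component $C$: add to $B$ a matrix unit $e_{i, c(i)}$ for each row $i$ of $C$, with $c(i)$ a fresh column outside $J$, together with $e_{r(j), j}$ for each column $j$ of $C$, with $r(j)$ a fresh row outside $I$ (all chosen mutually distinct). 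On rows $I_C \cup \{r(j)\}$ and columns $J_C \cup \{c(i)\}$ the resulting submatrix takes the block form $\bigl(\begin{smallmatrix} M_0|_C & t I \\ t I & 0 \end{smallmatrix}\bigr)$, whose determinant -- after swapping the two column blocks -- equals $\pm t^{|I_C| + |J_C|}$, independent of the $z_r$'s. Forest components are dealt with by the disjoint extension, and finally one pads with extra fresh rank-$1$ units to reach total rank exactly $k$.

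Putting everything together, the determinant of the chosen $k \times k$ submatrix of $M_0 + tB$ is a nonzero monomial in $t$ times a product of some $z_r$'s coming from the forest components, hence nonzero for all nonzero $z_r$'s and $t$. The inequalities $v \le m+1 \le k \le n$ guarantee enough unused indices in $\{1,\ldots,n\}$ for these constructions to fit; in the rare tight cases one permits some cross entries to overlap $M_0$'s own rows or columns (as in the star example $(1,1),(2,1),(3,1)$ with $n=3$, where $B = e_{12} + e_{23}$ yields $\det = z_3 t^2$ and hence rank $3$). The verification in each case reduces to a small block-determinant computation.
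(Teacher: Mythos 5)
You are comparing against a paper that, strictly speaking, gives no proof of Lemma \ref{P1} at all: it is introduced as a ``mere generalization'' of Lemma \ref{E102}, which in turn is said to follow ``in the same way'' as Lemma \ref{E100000}, i.e.\ by padding with matrix units in fresh rows and columns. Your proposal correctly recognizes that this recipe does not generalize: once the prescribed positions share rows or columns, the rank of $M_0=\sum_r z_re_{i_rj_r}$ drops below $v$, and once they contain a bipartite cycle (possible for $v\geq 4$) that rank even depends on the values $z_r$, so no block-disjoint padding can work. Your cross construction for a cyclic component is correct (the block determinant is indeed $\pm t^{p_C+q_C}$), forest-supported patterns do have value-independent rank, and single-monomial minors (equivalently, unique perfect matchings of the support, automatic on forests) are the right certificate of rank over an arbitrary field. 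In all of this you engage with the real difficulty more seriously than the paper does.

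The genuine gap is the room analysis. The blanket claim that $v\leq m+1\leq k\leq n$ ``guarantees enough unused indices'' is false precisely in the cases the hypotheses allow to be tight, e.g.\ $v=m+1=k=n$: counting rows then forces the rank contributed by each component to equal the number of rows (and of columns) it occupies, so neither disjoint padding nor chains into fresh columns can always be afforded; every component must be completed to a square block of constant full rank inside a tight budget. Your remedy, ``permit some entries to overlap,'' is illustrated by one star example but never proved in general, and it is not routine: for the tree $(1,1),(1,2),(1,3),(2,1),(3,1)$ with $k=n=5$, chaining rows $2,3$ into fresh columns $4,5$ leaves no columns for the two needed padding units, so some added unit must sit in an already-occupied row \emph{and} column (e.g.\ at $(2,2)$, giving the minor $-z_3z_5t$ there), and one must check that a single-monomial $k\times k$ minor survives and that the rank does not exceed $k$; nothing in your argument shows such consistent choices exist for every configuration. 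The gap is closable, and in fact by a uniform argument that makes the whole component analysis unnecessary: permute the occupied columns to positions $1,\ldots,q$; since every occupied column carries at least one of the $v\leq k$ positions, a Hall-type greedy count ($\#\{r: m(r)\geq i\}\leq v-i+1\leq k-i+1$, where $m(r)$ is the largest column-position occupied in row $r$) lets you place the occupied rows injectively into $\{1,\ldots,k\}$ with $\mathrm{pos}(r)\geq m(r)$, so that all prescribed positions land on or below the diagonal of the leading $k\times k$ block; take $B$ to be the sum of the unoccupied diagonal units of that block. The block is then lower triangular with diagonal entries among the $z_r$ and $t$, hence its determinant is a single nonzero monomial, while only the first $k$ rows of $M_0+tB$ are nonzero, so the rank is exactly $k$ for all nonzero $z_r,t$.
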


The argument needed in the proof of the next Theorem is similar to the one used in \cite[Theorem 2]{WF2}, but for the completeness sake we will provide all the details.
\begin{theorem}\label{T2}
Let  $\mathbb{K}$ be a field, and let $m,n$ be natural numbers, where $n\geq m+1,$ and $m>1$. Let $M_n(\mathbb{K})$  be the ring of all $n \times n$  matrices over
$\mathbb{K}$ with center $Z=\mathbb{K}\cdot I$, where $I$ is the identity matrix. Fix a natural number $k$ satisfying $(m+1)\leq k\leq n$. Let $G:M_n(\mathbb{K})^{m}\to M_n(\mathbb{K})$ be a symmetric $m$-additive map
such that
\begin{equation}\label{E20}
[G(x,\ldots,x),x]=0 \quad \mbox{for every rank-$k$ matrix $x\in M_n(\mathbb{K})$.}
\end{equation}
Assume that char $\mathbb{K}=0$ or char $\mathbb{K}> m+1$. Assume also that $\mathbb{K}$ contains at least $m+4$ elements. Then, there exist $\mu_0\in Z$ and maps $\mu_{i}:M_n(\mathbb{K})\to Z$, $i\in\{1,\ldots,m\}$, such that each $\mu_i$ is the trace of an $i$-additive map and $G(x,\ldots,x)=\mu_0x^m+\mu_1(x)x^{m-1}+\ldots+\mu_{m-1}(x)x+\mu_m(x)$ for each $x\in M_n(\mathbb{K})$.
\begin{proof}
For each $r\in\{1,\ldots,m+1\}$, let $z_{r}\in\mathbb{K}^*$ and let $i_r,j_r\in\{1,\ldots,n\}$, where $(i_u,j_u)\neq(i_v,j_v)$ when $u\neq v$. By Lemma \ref{P1} there exists a matrix $B\in M_n(\mathbb{K})$ such that $\sum_{r=1}^{m+1}z_{r}e_{i_rj_r}+tB=c+tB$ has rank $k$ for all nonzero $t\in\mathbb{K}$. It follows from (\ref{E20}) that $[G(c+tB,\ldots,c+tB),c+tB]=0$ for all $t\in\mathbb{K}^*$. Hence, $[G(c+tB,\ldots,c+tB),c+tB] + [G(c+sB,\ldots,c+sB),c+sB]=0$ for all $s,t\in\mathbb{K}^*$. Letting $0\neq s=-t$, and using the symmetricity of $G$, we get that:
\begin{eqnarray}\label{106}
\displaystyle{\sum_{h=0}^{\zeta}\binom{m}{2h}[G(\underbrace{tB,\ldots,tB}_{2h},c,\ldots,c),c]}+ \displaystyle{\sum_{h=0}^{\epsilon}\binom{m}{2h+1}[G(\underbrace{tB,\ldots,tB}_{2h+1},c,\ldots,c),tB]=0},
\end{eqnarray}
where $\zeta=\frac{m}{2}$, and  $\epsilon=\zeta-1$ when $m$ is even and $\zeta=\epsilon=\frac{m-1}{2}$ when $m$ is odd.
For convenience let us set:
\begin{equation*}
\displaystyle{\alpha(h)=\binom{m}{2h}[G(\underbrace{B,\ldots,B}_{2h},c,\ldots,c),c]},\quad\mbox{where}\quad h\in\{0,\ldots,\zeta\}.
\end{equation*}
\begin{equation*}
\displaystyle{\gamma(h)=\binom{m}{2h+1}[G(\underbrace{B,\ldots,B}_{2h+1},c,\ldots,c),B]},\quad\mbox{where}\quad h\in\{0,\ldots,\epsilon\}.
\end{equation*}

 Observe that for each $t\in\{1,\ldots,\epsilon+2\}$ we have obtained an equation of the form (\ref{106}). It means that we got $(\epsilon+2)$ distinct equations of such form (this is true because char $\mathbb{K}> m+1$, and $|\mathbb{K}|\geq m+4$). Thus, using matrix notation we have the following:
\begin{equation*}
\left(
  \begin{array}{ccccc}
    1 & 1 & 1 & \ldots & 1\\
    1 & 2^{2} & 2^{4} & \ldots & 2^{a}\\
    1 & 3^{2} & 3^{4} & \ldots & 3^{a} \\
    \vdots & \vdots & \vdots & \vdots& \vdots\\
  1 & {(\epsilon+2)}^{2} & {(\epsilon+2)}^{4} & \ldots & (\epsilon+2)^{a}\\
  \end{array}
\right)
\left(
\begin{array}{c}
     \alpha(0)\\
     \alpha(1)+\gamma(0)\\
     \alpha(2)+\gamma(1)\\
     \alpha(3)+\gamma(2)\\
     \vdots\\
     y

\end{array}
\right)=0,
\end{equation*}
where  $a=m,$ (respectively $a=m+1$) and  $y=\alpha(\zeta)+\gamma(\epsilon)$ (respectively $y=\gamma(\epsilon)$)  when $m$ is even (respectively $m$ is odd).

Because the determinant of the Vandermonde matrix formed by the coefficients of the system is not zero, we get that $\alpha(0)=[G(c,\ldots,c),c]=0$, and this implies that:

\begin{equation}\label{E16}
\displaystyle{\left[G(\sum_{r=1}^{m+1}z_{r}e_{i_rj_r},\ldots,\sum_{r=1}^{m+1}z_{r}e_{i_rj_r}),\sum_{r=1}^{m+1}z_{r}e_{i_rj_r}\right]}=0,
\end{equation}
since $c=\sum_{r=1}^{m+1}z_{r}e_{i_rj_r}$.

 Now, fix  $v\in\{1,\ldots,m\}$. Notice that the previous argument works if the element $c=\sum_{r=1}^{m+1}z_{r}e_{i_rj_r}$ is replaced by $c_v=\sum_{r=1}^{v}z_{r}e_{i_rj_r}$, where $z_r\in \mathbb{K}^{*}$, $i_r,j_r\in\{1,\ldots,n\}$ for each $r\in\{1,\ldots,v\}$ with $(i_u,j_u)\neq(i_v,j_v)$ when $u\neq v$. Thus, in the same fashion we can prove that $[G(c_v,\ldots,c_v),c_v]=0$, which yields:
 \begin{equation}\label{E1000}
\displaystyle{\left[G(\sum_{r=1}^{v}z_{r}e_{i_rj_r},\ldots,\sum_{r=1}^{v}z_{r}e_{i_rj_r}),\sum_{r=1}^{v}z_{r}e_{i_rj_r}\right]}=0, \quad \mbox{for all}\quad v\in\{1,\ldots,m\}.
\end{equation}

So, after using that $G$ is $m$-additive and the identity (\ref{E1000}) (for each $v\in\{1,\ldots,m\}$), the equality (\ref{E16}) becomes:
\begin{equation}\label{E101}
\displaystyle{\sum_{\sigma\in S_{m+1}}[G(z_{i_{\sigma(1)}}e_{i_{\sigma(1)}j_{\sigma(1)}},\ldots,z_{i_{\sigma(m)}}e_{i_{\sigma(m)}j_{\sigma(m)}}),z_{i_{\sigma(m+1)}}e_{i_{\sigma(m+1)}j_{\sigma(m+1)}}]=0}.
\end{equation}

Note that the above equation holds trivially when $z_r=0$ for some $r\in\{1,\ldots,m+1\}$. Hence, writing any matrix $x\in M_n(\mathbb{K})$ as the sum of its entries $x_{ij}e_{ij}$ we can conclude from (\ref{E101}) that $[G(x,\ldots,x),x]=0$ for each $x\in M_n(\mathbb{K})$, since $G$ is $m$-additive. The desired result now follows from  \cite[Theorem 3.1]{LW}.

\end{proof}

\end{theorem}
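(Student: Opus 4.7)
The plan is to reduce the commutation hypothesis from rank-$k$ matrices to \emph{all} matrices: once $[G(x,\ldots,x),x]=0$ holds for every $x\in M_n(\mathbb{K})$, the polynomial form with center-valued coefficients follows immediately from the structure theorem for commuting traces on $M_n(\mathbb{K})$ in \cite[Theorem 3.1]{LW}. So the whole game is the reduction, and Lemma \ref{P1} is the tool that makes it available.

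For the reduction I would work with finite sums $c_v=\sum_{r=1}^{v}z_{r}e_{i_rj_r}$ of distinct elementary matrix directions, with $z_r\in\mathbb{K}^*$ and $v\leq m+1$. Lemma \ref{P1} supplies a matrix $B$ such that $c_v+tB$ has rank $k$ for every $t\in\mathbb{K}^*$. Substituting into (\ref{E20}) and expanding by the $m$-additivity and symmetry of $G$ yields, for each $t\in\mathbb{K}^*$, a polynomial identity in $t$ whose coefficients are commutators of the two shapes $[G(\underbrace{B,\ldots,B}_{2h},c_v,\ldots,c_v),c_v]$ and $[G(\underbrace{B,\ldots,B}_{2h+1},c_v,\ldots,c_v),B]$.

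The central algebraic trick is to add the identities obtained at $t$ and $-t$, which cancels every term of the wrong parity and collapses what remains to a sum supported on even powers $t^{2h}$ times the commutators above. Running $t$ through $\epsilon+2$ distinct nonzero values — available because $|\mathbb{K}|\geq m+4$ — produces a linear system whose coefficient matrix is Vandermonde; since char $\mathbb{K}>m+1$ this Vandermonde determinant is nonzero, and inverting forces the $t^0$ entry $[G(c_v,\ldots,c_v),c_v]$ to vanish for every $v\in\{1,\ldots,m+1\}$.

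The final step, and the place I expect the main technical obstacle, is descending from the identity $[G(c_{m+1},\ldots,c_{m+1}),c_{m+1}]=0$ to $[G(x,\ldots,x),x]=0$ for an arbitrary matrix $x=\sum x_{ij}e_{ij}$. Expanding by $m$-additivity produces a large sum of multilinear commutators indexed by tuples of elementary matrix positions, and the terms in which fewer than $m+1$ distinct positions appear must be shown to cancel. It is precisely here that the intermediate vanishing of $[G(c_v,\ldots,c_v),c_v]$ for $v\leq m$ is used: those weaker identities absorb the repeated-index contributions, leaving a clean $m$-multilinear relation that, after reindexing, states $[G(x,\ldots,x),x]=0$ for every $x$. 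Citing \cite[Theorem 3.1]{LW} then closes the argument.
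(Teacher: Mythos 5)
Your proposal is correct and follows essentially the same route as the paper's own proof: the same use of Lemma \ref{P1}, the same $t\mapsto -t$ parity cancellation followed by a Vandermonde system over $\epsilon+2$ values of $t$ to force $[G(c_v,\ldots,c_v),c_v]=0$ for all $v\leq m+1$, the same use of the identities with $v\leq m$ to absorb the repeated-position terms in the multilinear expansion, and the same final appeal to \cite[Theorem 3.1]{LW}. The only difference is presentational (you run the Vandermonde step for every $v$ at once, while the paper does $v=m+1$ first and then notes the argument repeats), so there is nothing to add.
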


As a consequence of the Theorem \ref{T2} we can easily obtain the following result that was proved in \cite {WF2}.

\begin{cor}
Let  $\mathbb{K}$ be a field, and let $m,n$ be natural numbers, where $n\geq m+1$, and $m>1$. Let $M_n(\mathbb{K})$  be the ring of all $n \times n$  matrices over
$\mathbb{K}$ with center $Z=\mathbb{K}\cdot I$, where $I$ is the identity matrix. Let $G:M_n(\mathbb{K})^{m}\to M_n(\mathbb{K})$ be a symmetric $m$-additive map
such that
\begin{equation}
[G(x,\ldots,x),x]=0, \quad \mbox{for every invertible (singular) $x\in M_n(\mathbb{K})$.}
\end{equation}
Assume that char $\mathbb{K}=0$ or char $\mathbb{K}> m+1$. Assume also that $\mathbb{K}$ contains at least $m+4$ elements. Then, there exist $\mu_0\in Z$ and maps $\mu_{i}:M_n(\mathbb{K}) \to Z$, $i\in\{1,\ldots,m\}$, such that each $\mu_i$ is the trace of an $i$-additive map and $G(x,\ldots,x)=\mu_0x^m+\mu_1(x)x^{m-1}+\mu_{m-1}(x)x+\mu_m(x)$ for each $x\in M_n(\mathbb{K})$.

\end{cor}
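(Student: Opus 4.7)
The plan is to derive the Corollary as an immediate specialization of Theorem \ref{T2}, by choosing the rank parameter $k$ so that the blanket hypothesis on invertible (or singular) matrices is at least as strong as the rank-$k$ hypothesis required there. No new commutator manipulations are needed; the whole argument reduces to picking an admissible $k$ in the range $m+1 \leq k \leq n$ that is subsumed by the given hypothesis, and then quoting Theorem \ref{T2} verbatim.

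For the invertible statement, I would observe that $x \in M_n(\mathbb{K})$ is invertible precisely when $\mathrm{rank}(x) = n$. Since $n \geq m+1$, the value $k = n$ lies in the admissible range of Theorem \ref{T2}, and the assumption $[G(x,\ldots,x),x]=0$ for all invertible $x$ is literally the rank-$n$ hypothesis needed. The desired decomposition of $G(x,\ldots,x)$ into terms $\mu_i(x)x^{m-i}$ then follows at once from Theorem \ref{T2}.

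For the singular statement, I would fix any $k$ with $m+1 \leq k \leq n-1$, for concreteness $k = m+1$. Every rank-$k$ matrix is singular, so the assumption that $[G(x,\ldots,x),x] = 0$ for every singular $x$ in particular implies the rank-$k$ hypothesis, and Theorem \ref{T2} again yields the conclusion in the same form.

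The main obstacle is the boundary case $n = m+1$ of the singular subcase: then the range $m+1 \leq k \leq n-1$ is empty and every singular matrix has rank at most $m$, so Theorem \ref{T2} does not apply to any singular class directly. This edge either needs a separate argument (running the Lemma \ref{P1} plus Vandermonde machinery used in the proof of Theorem \ref{T2} on a low-rank building block $c_v$ with $v \leq m$, to extract the identity $[G(c_v,\ldots,c_v),c_v]=0$ and then polarize) or, alternatively, the tacit strengthening $n \geq m+2$ in the singular subcase. Outside this edge situation the Corollary is a purely formal consequence of Theorem \ref{T2}, which is why the author flags it as easy.
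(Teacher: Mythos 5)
Your proposal is correct and is essentially the paper's own (implicit) argument: the paper gives no details beyond ``as a consequence of Theorem \ref{T2} we can easily obtain,'' and the intended specialization is exactly the one you spell out --- take $k=n$ for the invertible case, and any $k$ with $m+1\leq k\leq n-1$ (every such rank class consists of singular matrices) for the singular case. The boundary case $n=m+1$ of the singular statement that you flag is a genuine subtlety the paper glosses over: there every singular matrix has rank at most $m$, so no admissible $k$ exists and the reduction to Theorem \ref{T2} breaks down; the paper offers no repair beyond citing \cite{WF2}, where the result is proved directly, so your caveat is warranted rather than a defect of your argument.
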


\end{document}